\let\SavedRightarrow=\Rightarrow
\let\Rightarrow=\SavedRightarrow
\newcommand{\Raa }{\mathcal R}
\newcommand{\Pee }{\mathcal P}
\newcommand{\pr}{\operatorname{pr}}
\newcommand{\supp}{\operatorname{supp}}
\renewcommand{\int}{\operatorname{Int}}
\newtheorem{theorem}{Theorem}
\theoremstyle{definition}
\newtheorem{example}{Example}
\author{Judyta B\k{a}k} 
\address{Judyta B\k{a}k\\  Institute of Mathematics, University of Silesia \\  Bankowa 14, 40-007 Katowice\\and Institute of Mathematics,
Jan Kochanowski University, 
\'{S}wi\k{e}tokrzyska~15,
25-406 Kielce, Poland} \email{jubak@us.edu.pl} 
\author{Andrzej Kucharski} 
\address{Andrzej Kucharski \\  Institute of Mathematics, University of Silesia \\  Bankowa 14, 40-007 Katowice} \email{andrzej.kucharski@us.edu.pl} 
\date{ver.30}
\begin{document}

\begin{abstract}
We prove that a player $\alpha$ has a winning strategy in the Banach--Mazur game on a space $X$ if and only if $X$ is F-Y countably $\pi$-domain representable. We show that Choquet complete spaces are F-Y countably domain representable. We give  an example of a space, which is F-Y countably domain representable, but it is not F-Y $\pi$-domain representable. 
\end{abstract}

\title{The Banach--Mazur game and the strong Choquet game in domain theory} 
\subjclass[2000]{Primary: 54G20,  91A44; Secondary: 54F99}
\keywords{weakly $\alpha$-favorable space, Banach-Mazur game, Choquet game, strong Choquet game, continuous directed complete partial order, domain representable space}

\maketitle
 
\section{Introduction}


The famous Banach--Mazur game was invented by Mazur in 1935. For the history of game theory and facts  about game the reader is referred to the survey \cite{Telgarsky}. Let $X$ be  
a topological space and $X=A\cup B$ be any given  decomposition of $X$ into two disjoint sets. The game 
$BM(X,A,B)$ is played as follows: Two players, named $A$ and $B$ alternately choose open nonempty sets 
with $U_0\supseteq V_0\supseteq U_1\supseteq V_1\supseteq\cdots$. 

$\begin{array}{lccccr}

A\;&U_0&   &U_1&   &\\
 &   &   &   &   &\cdots\\
B\;&   &V_0&   &V_1& 
\end{array}  $

Player A wins this  game if $A\cap\bigcap_{n\in\omega}U_n\neq\emptyset$. Otherwise B wins.

We study well-known modification of this game considered by Choquet in 1958, known as Banach--Mazur game or Choquet game. Player $\alpha$ and $
\beta $  alternately choose open nonempty sets with $U_0\supseteq V_0\supseteq U_1\supseteq V_1\cdots$. 

$\begin{array}{lccccr}
\beta\;&U_0&   &U_1&   &\\
 &   &   &   &   &\cdots\\
\alpha\;&   &V_0&   &V_1& 
\end{array}  $

Player $\alpha$ wins this play if $\bigcap \{V_n: n\in \omega\}\neq\emptyset$. Otherwise $\beta$ wins. 
Denote this game by $BM(X).$ Every finite sequence of sets $(U_0,\ldots ,U_n)$, obtained by 
the first $n$ steps in this game is called \textit{legal $n$ moves} of $\beta$ (or partial play).  A \textit{strategy} for 
the player $\alpha$ in the game  $BM(X)$ is a map $s$ that assigns to each legal $n$ moves of $\beta$  a 
nonempty open set $V_n\subseteq U_n$. The strategy $s$ is called \textit{winning strategy} for player $
\alpha$ if for every legal $n$ moves of $\beta$ $(U_0,\ldots ,U_n)$ such that $V_{n}
=s(U_0,\ldots ,U_n)$, we have $\bigcap_{n=1}^{\infty} V_n\neq\emptyset$. The space $X$ is 
called \textit{weakly $\alpha$-favorable} (see \cite{White}) if $X$ admits a winning strategy for the 
player $\alpha$ in the game $BM(X)$. We say that $(W_0,\ldots,W_k)$ is \textit{stronger} than $(U_0,\ldots, U_m)$ if
$m\leq k$ and $U_0=W_0,\ldots,U_m=W_m$.  Notice that if $(W_0,\ldots,W_k)$ is stronger than $(U_0,\ldots, U_m)$, 
then $s(W_0,\ldots,W_k)\subseteq s(U_0,\ldots, U_m)$, we denote it by $(U_0,\ldots, U_m)\preceq (W_0,\ldots,W_k)$.

The \textit{strong Choquet} game is defined as follows: 

$\begin{array}{lccccr}
\beta\;&U_0\ni x_0&   &U_1\ni x_1&   &\\
 &   &   &   &   &\cdots\\
\alpha\;&   &V_0&   &V_1& 
\end{array}  $

Player $\beta$ and $\alpha$ take turns in playing nonempty open subset, similar to the Banach--Mazur game. In the first
 round, player $\beta$ starts by choosing a point $x_0$ and  an open set $U_0$ containing $x_0$, then player 
 $\alpha$ responds with an open set $V_0$ such that $x_0\in V_0\subseteq U_0$. In the $n$-th round, player $\beta$ 
 selects a point $x_n$ and an open set $U_n$ such that $V_{n-1}\supseteq U_n$ and $\alpha$ responds with an open set $V_n$ such that 
  $x_n\in V_n\subseteq U_n$.  Player $\alpha$ wins if 
$\bigcap\{V_n: n\in \omega\}\ne\emptyset$. Otherwise $\beta$ wins. We say that $(W_0,x_0,\ldots,W_k,x_k)$ is \textit{stronger} than $(U_0,y_0,\ldots, U_m,y_m)$ if
  $m\leq k\mbox{ and }U_0=W_0,\ldots,U_m=W_m\mbox{ and }x_0=y_0,\ldots,x_m=y_m.$
   We denote it by 
  $(U_0,y_0,\ldots, U_m,y_m)\preceq (W_0,x_0,\ldots, W_k,x_k)$. 
  We denote a sequence $(W_0,x_0,\ldots, W_k,x_k)$ by $(\overrightarrow{x}\circ \overrightarrow{W})$. 
  A topological space $X$ is called \textit{Choquet complete} if the player $\alpha$ has a winning strategy 
    in the strong Choquet game, denote it by $Ch(X)$.

For a topological space $X$, let $\tau(X)$ ($\tau^*(X)$) denote the topology on the set $X$ (the family of nonempty elements of $\tau(X)$).  A family $\mathcal P$ of open nonempty sets is called a
  \textit{$\pi$-base} if for every open nonempty set $U$ there is $P\in\mathcal P$ such that $P\subseteq U.$

A domain is a continuous directed complete partial order. This notion has been introduced by D. Scott as a model for the $\lambda$-calculus, for more information see \cite{arab}, \cite{scott}. Domain representable topological spaces were introduced by H. R. Bennett and D. J. Lutzer \cite{benluz}. We say that a topological space is domain representable if it is homeomorphic to the space of maximal elements of some continuous directed complete partial order topologized  with the Scott topology. 
In 2013 W. Fleissner and L. Yengulalp \cite{FY13} introduced  an equivalent definition of  a \textit{domain representable space} for $T_1$ topological spaces. We do not assume the 
antisymmetry condition on the relation $\ll$.
As suggested S.~\"{O}nal and \c{C}.~Vural in \cite{turcy} if we need additional antisymmetric property, let consider equivalent relation $E$ on the set $Q$ defined by ``$pEq$ if and only if ($p\ll q$ and $q\ll p$) or $p=q$''. We do not assume any separation axioms, if it is not explicitly stated.

We say that a topological space  $X$ is \textit{F-Y} (\textit{Fleissner--Yengulalp}) \textit{countably domain representable}   if there is  a triple $(Q,\ll, B)$ such that
 \begin{enumerate}
 \item[(D1)] $B: Q\to\tau^*(X)$ and $\{B(q):q\in Q\}$ is a base for $\tau(X)$,
 \item[(D2)] $\ll$ is a transitive relation on $Q$,
 \item[(D3)] for all $p,q\in Q$, $p\ll q$ implies $B(p)\supseteq B(q)$,
 \item[(D4)] For all $x\in X$, a set $\{q\in Q:x\in B(q)\}$ is upward directed by $\ll$, (every pair of elements has an upper bound),
 \item[(D5$_{\omega_1}$)] if $D\subseteq Q$ and $(D,\ll)$ is countable and upward directed, then $\bigcap\{B(q): q\in D\}\ne\emptyset$.
 \end{enumerate}
If the conditions (D1)--(D4) and a condition
 \begin{enumerate}
 \item[(D5)] if $D\subseteq Q$ and $(D,\ll)$ is  upward directed, then $\bigcap\{B(q): q\in D\}\ne\emptyset$
 \end{enumerate}
are satisfied, we say that a space $X$ is \textit{F-Y domain representable}.

W. Fleissner and L. Yengulalp in \cite{FY15} introduced a notion of a \textit{$\pi$-domain representable space}, 
this is analogous to the notion of a domain representable.
 
 We say that a topological space $X$ is \textit{F-Y} (\textit{Fleissner--Yengulalp}) \textit{countably $\pi$-domain  representable}   if there is a  triple $(Q,\ll, B)$ such that
 \begin{enumerate}
 \item[($\pi$D1)]$B: Q\to\tau^*(X)$ and $\{B(q):q\in Q\}$ is a $\pi$-base for $\tau(X)$,
 \item[($\pi$D2)] $\ll$ is a transitive relation on $Q$,
 \item[($\pi$D3)] for all $p,q\in Q$, $p\ll q$ implies $B(p)\supseteq B(q)$,
 \item[($\pi$D4)] if $q,p\in Q$ satisfy $B(q)\cap B(p)\ne\emptyset$, there exists $r\in Q$ satisfying $p,q\ll r$,
 \item[($\pi$D5$_{\omega_1}$)] if $D\subseteq Q$ and $(D,\ll)$ is countable and upward directed), then $\bigcap\{B(q): q\in D\}\ne\emptyset$.
 \end{enumerate}
 
 If the conditions ($\pi$D1)--($\pi$D4) and a condition
 \begin{enumerate}
 \item[($\pi$D5)] if $D\subseteq Q$ and $(D,\ll)$ is  upward directed, then $\bigcap\{B(q): q\in D\}\ne\emptyset$
 \end{enumerate}
are satisfied, we say that a space $X$ is \textit{F-Y $\pi$-domain representable}.

\section{$\pi$-domain representable spaces}
 
   P. S Kenderov and J. P. Revalski in \cite{ken-reval} have showed that the  set $E=\{f\in C(X):f\text{ attains  its  minimum  in }X\}$ contains $G_\delta$ dense subset of $C(X)$ is equivalent to existence of a winning strategy for $\alpha$ player in the Banach--Mazur game. Oxtoby \cite{ox} showed that if $X$ is metrizable space then  Player $\alpha$ has winning strategy in $BM(X)$  if and only if  $X$ contains a dense completely  metrizable subspace. A. Krawczyk and W. Kubi\'{s} \cite{kra-kub} have characterized the existence of winning strategies for player $\alpha$ in the abstract Banach--Mazur game played with finitely generated structures instead of open sets. In \cite{kub} there has been presented a version of the Banach--Mazur game played on  partially ordered set. We give a characterization of existence of a winning strategy for the player $\alpha$ in the Banach--Mazur game using the notation introduced by W. Fleissner and L. Yengulalp ``$\pi$-domain representable space''.
  
 \begin{theorem}\label{twierdz1}
A topological space $X$ is weakly $\alpha$-favorable if and only if $X$ is F-Y countably $\pi$-domain representable.
 \end{theorem}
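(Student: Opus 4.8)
The plan is to prove both implications, the backward one being routine and the forward one carrying the real difficulty.

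For the direction \emph{F-Y countably $\pi$-domain representable $\Rightarrow$ weakly $\alpha$-favorable}, I would fix a triple $(Q,\ll,B)$ satisfying ($\pi$D1)--($\pi$D5$_{\omega_1}$) and describe a strategy for $\alpha$ directly. Using choice, fix for every nonempty open $U$ an element $b_U\in Q$ with $B(b_U)\subseteq U$ (possible by ($\pi$D1)), and for every pair $p,q$ with $B(p)\cap B(q)\ne\emptyset$ a witness $r(p,q)$ with $p,q\ll r(p,q)$ (possible by ($\pi$D4)). Given a legal play $(U_0,\dots,U_n)$ I reconstruct recursively $q_0\ll q_1\ll\cdots\ll q_n$ in $Q$ with $B(q_i)\subseteq U_i$: put $q_0=b_{U_0}$, and having $q_{n-1}$ with $B(q_{n-1})=V_{n-1}\supseteq U_n$, set $q_n'=b_{U_n}$; then $B(q_n')\subseteq U_n\subseteq B(q_{n-1})$, so $B(q_n')\cap B(q_{n-1})\ne\emptyset$ and I may take $q_n=r(q_{n-1},q_n')$, which gives $q_{n-1}\ll q_n$ and, by ($\pi$D3), $B(q_n)\subseteq B(q_n')\subseteq U_n$. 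The strategy plays $V_n:=B(q_n)$. Each resulting run yields a countable $\ll$-chain $\{q_n\}$, which is upward directed by transitivity ($\pi$D2), so ($\pi$D5$_{\omega_1}$) gives $\bigcap_n V_n=\bigcap_n B(q_n)\ne\emptyset$; hence $\alpha$ wins and $X$ is weakly $\alpha$-favorable. The strategy is well defined because the fixed choices let one recover $q_0,\dots,q_n$ deterministically from the history.

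For the converse I would start from a winning strategy $s$ for $\alpha$ and let $Q$ be the set of finite legal plays $(U_0,\dots,U_n)$ consistent with $s$, with $B(U_0,\dots,U_n)=s(U_0,\dots,U_n)$. Condition ($\pi$D1) is immediate: for any nonempty open $U$ the one-move play $(U)$ has $B((U))\subseteq U$, so the responses form a $\pi$-base. Reading $\ll$ as ``is a strictly longer $s$-consistent continuation of'' makes ($\pi$D2) and ($\pi$D3) hold at once, since continuing a play only shrinks $\alpha$'s response. The winning hypothesis enters in ($\pi$D5$_{\omega_1}$): a countable upward directed subfamily has a cofinal $\ll$-increasing sequence, which is precisely an initial run of a single infinite play of $BM(X)$ in which $\alpha$ follows $s$; since $s$ is winning the intersection of its responses is $\bigcap_n V_n\ne\emptyset$, and by cofinality this equals the intersection over the whole family.

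The main obstacle is ($\pi$D4). With $\ll$ taken to be continuation, two consistent plays $p,q$ with $B(p)\cap B(q)\ne\emptyset$ need not be comparable, and a literal common continuation would force comparability. The plan is to enrich the construction so that from any play one may first pass to an arbitrary nonempty open $O\subseteq B(p)$ and then resume following $s$ inside $O$; the amalgam of $p$ and $q$ is then obtained by descending into $O=B(p)\cap B(q)$, which is a legal continuation of \emph{both} in this ``memoryless'' sense. The delicate part, and the heart of the proof, is to set up this merging so that ($\pi$D4) holds while ($\pi$D5$_{\omega_1}$) is preserved, i.e. so that every countable upward directed family can still be straightened into one $s$-consistent infinite play whose responses are cofinal in the family, whence the winning property of $s$ forces the nonempty intersection. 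I expect the bookkeeping guaranteeing that only finitely many genuine merges occur along any increasing chain --- so that a single coherent tail play emerges --- to be the technically hardest step.
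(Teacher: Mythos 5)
Your backward implication (F-Y countably $\pi$-domain representable $\Rightarrow$ weakly $\alpha$-favorable) is complete and correct; the paper leaves that direction as an easy exercise, and your argument via the choice functions $b_U$ and $r(p,q)$ is a perfectly good way to carry it out. But the forward implication --- the substance of the theorem --- is not proved. You set up the naive poset of $s$-consistent finite plays ordered by continuation, correctly observe that ($\pi$D4) fails for it (incomparable plays with intersecting responses admit no common continuation), and then only announce a plan to repair it. That plan is precisely the missing proof: you never define the enriched $Q$, the relation $\ll$, or the map $B$, and never verify ($\pi$D4) and ($\pi$D5$_{\omega_1}$) for them.

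Moreover, the invariant you say you would aim for --- ``only finitely many genuine merges occur along any increasing chain, so that a single coherent tail play emerges'' --- is the wrong target: in a countable upward directed family the merges can be genuinely infinite in number, and nothing in your setup rules this out. The paper resolves the difficulty differently. An element of $Q$ is a finite \emph{set} $\{s(\overrightarrow{U}_0),\ldots,s(\overrightarrow{U}_i)\}$ of strategy responses of several mutually unrelated partial plays, arranged so that $s(\overrightarrow{U}_0)\supseteq\cdots\supseteq s(\overrightarrow{U}_i)$; the map $B$ returns the last (smallest) response, and $p\ll r$ demands, besides the inclusion $B(p)\supseteq s(\overrightarrow{W}_0)$ for the first response of $r$, that \emph{every} play occurring in $p$ has a $\preceq$-extension occurring in $r$. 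Then ($\pi$D4) holds because any play $\overrightarrow{U}$ can be steered into any nonempty open $V\subseteq s(\overrightarrow{U})$ by appending $V$ as $\beta$'s next move: one extends all plays of $p$ into $B(p)\cap B(q)$, then all plays of $q$ below these, and the set of all the new responses is a common upper bound. And ($\pi$D5$_{\omega_1}$) survives infinitely many merges precisely because $\ll$ forces play extension, not merely inclusion of responses: along a chain $q_0\ll q_1\ll\cdots$ cofinal in $D$ one diagonalizes to extract a single $\preceq$-increasing sequence of plays $\overrightarrow{U}'_n$ interleaving with the sets $B(q_n)$, i.e.\ one infinite run consistent with $s$, whence $\emptyset\neq\bigcap_n s(\overrightarrow{U}'_n)=\bigcap_n B(q_n)\subseteq\bigcap\{B(p):p\in D\}$. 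Your ``memoryless restart of $s$ inside $O$'' abandons the histories at each merge, and with them the only tool available --- $s$ being winning along a single run --- that can produce a point in the intersection.
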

  
 \begin{proof}
If $X$ is F-Y countably $\pi$-domain representable, then  it is easy to show that $X$ is weakly $\alpha$-favorable.

Assume that $X$ is weakly $\alpha$-favorable. We shall show that	$X$ is countably $\pi$-domain representable. 
 Let $s$ be a winning strategy for the player $\alpha$ in $BM(X)$. We consider a family $Q$ consisting of all finite sets
  $\{s(\overrightarrow{U}_0),\ldots,s(\overrightarrow{U}_i)\}$, 
  where $\overrightarrow{U}_m=(U^m_0,\ldots, U^m_{j_m}),\; m\leq i$  is a partial play, i.e.,
 $$U^m_0\supseteq s(U^m_0)\supseteq U^m_1\supseteq s(U^m_0,U^m_1)\supseteq\ldots\supseteq U^m_{j_m}
 \supseteq s(U^m_0,\ldots,U^m_{j_m})$$
  and $s(\overrightarrow{U}_0)\supseteq\ldots \supseteq s(\overrightarrow{U}_i)$. 
  
Let us define a relation $\ll$ on the family $Q$, 
  \begin{gather*}
 \{s(\overrightarrow{U}_0),\ldots ,s(\overrightarrow{U}_i)\} \ll \{s(\overrightarrow{W}_0),\ldots ,s(\overrightarrow{W}_k)\} \text{ iff } s(\overrightarrow{U}_i)\supseteq s(\overrightarrow{W}_0)  \ \& \\
  i\leq k \;\& \;\forall_{j\leq i}\;\exists_{r\leq k} \;\overrightarrow{U}_j\preceq \overrightarrow{W}_r.
  \end{gather*}
  Since $\preceq$ is transitive $\ll$ is transitive. 
  
  Let define a map $B:Q\to\tau^*(X)$ by the formula 
  $$B(\{s(\overrightarrow{U}_0),\ldots ,s(\overrightarrow{U}_i)\})=s(\overrightarrow{U}_i),$$
  for  $\{s(\overrightarrow{U}_0),\ldots ,s(\overrightarrow{U}_i)\}\in Q$. 
  
  Since $\{s(V):V\in\tau^*(X)\}$ is a $\pi$-base  $\{B(q):q\in Q\}$ is a $\pi$-base for $\tau$. It is easy to see that the map $B$ satisfies the condition $(\pi D3)$. 
   
   Towards item ($\pi$D4), let $p,q\in Q$ be such that $B(q)\cap B(p)\ne\emptyset$ and $p=\{s(\overrightarrow{U}_0),\ldots ,s(\overrightarrow{U}_i)\}$, $q=\{s(\overrightarrow{W}_0),\ldots ,s(\overrightarrow{W}_k)\}$ and  $s(\overrightarrow{U}_0)\supseteq\ldots\supseteq s(\overrightarrow{U}_i)$  and  $s(\overrightarrow{W}_0)\supseteq\ldots\supseteq s(\overrightarrow{W}_k)$. Since $V=B(p)\cap B(q)\subseteq
   s(\overrightarrow{U}_0)$ and $s$ is a winning strategy, we find an  element $\overrightarrow U'_0$ stronger  than $\overrightarrow{U}_0$ such that
   $s(\overrightarrow U'_0)\subseteq V$. Step by step we find a partial play $\overrightarrow U'_j$ such that
    $\overrightarrow{U}_j\preceq \overrightarrow U'_j$ and $s(\overrightarrow U'_j)\subseteq 
     s(\overrightarrow U'_{j-1})$ for $j\le i$. Since $s(\overrightarrow U'_i)\subseteq  s(\overrightarrow{W}_{0})$, 
     we find a partial play  $\overrightarrow W'_0$ such that $\overrightarrow{W}_0 \preceq \overrightarrow W'_0$ and
      $s(\overrightarrow W'_0)\subseteq s(\overrightarrow U'_i)$. Similarly, as for the sequence $p$, for the sequence
       $q$,  we define $s(\overrightarrow W'_l)$ with $\overrightarrow{W}_l\preceq
        \overrightarrow W'_l$ and $s(\overrightarrow W'_l)\subseteq  s(\overrightarrow W'_{l-1})$ for all $l\le k$. 
Continuing  in this way we get an element $r=\{s(\overrightarrow U'_0),\ldots ,s(\overrightarrow U'_i), s(\overrightarrow W'_0), \ldots ,s(\overrightarrow W'_k)\}$ such that $p,q\ll r$.

 Now we  show the condition ($\pi$D5$_{\omega_1}$). Let $D\subseteq Q$ be a countable upward directed set and let 
   $D=\{p_n: n\in \omega\}$. We define  a chain $\{q_n: n\in \omega\}\subseteq D\subseteq Q$ such that $p_n\ll q_n$ for $n\in \omega$. 
   By the condition ($\pi$D3), we get $\bigcap\{B(q_n):n\in \omega\}\subseteq\bigcap \{B(p):p\in D\}$. Each $q_n\in Q$ is of the form $q_n=\{s(\overrightarrow{W^n_0}),\ldots,
   s( \overrightarrow{W^n_{k_n}})\}.$
   
   Since $q_0\ll q_1$ there is $j\leq k_1$ such that $\overrightarrow{W^0_0}\preceq \overrightarrow{W}^1_j$. We have 
     $$s(\overrightarrow{W}^0_0)\supseteq B(q_0)=s(\overrightarrow{W}^0_{k_0})\supseteq s(\overrightarrow{W}^1_j)\supseteq B(q_1)=s(\overrightarrow{W}^1_{k_1}) .$$
 
Let $s(\overrightarrow U'_0)=s(\overrightarrow{W}^0_0)$ and $s(\overrightarrow U'_1)=s(\overrightarrow{W}^1_j)$. Inductively we can choose a sequence $\{s(\overrightarrow U'_n):n\in\omega\}$ such that $\overrightarrow{U}_n'
 \preceq \overrightarrow U'_{n+1}$ and 
 $$B(q_{n}) \supseteq s(\overrightarrow U'_{n+1})\supseteq B(q_{n+1}).$$ 
 Since $s$ is a winning strategy for the player $\alpha$, we have $$\emptyset\ne\bigcap
  \{s(\overrightarrow U'_n):n\in\omega\}=\bigcap\{B(q_n): n\in \omega\}\subseteq\bigcap\{B(p):p\in D\}.$$
 \end{proof}	   

 We give an example of a space, which is F-Y countably domain representable, but it is not F-Y $\pi$-domain representable.
 Note that this space will be  F-Y countably $\pi$-domain representable and not F-Y domain representable.
\begin{example}
We consider a space 
 $$X=\sigma\big(\{0,1\}^{\omega_1}\big)=\big\{x\in \{0,1\}^{\omega_1}: |\supp x|\le \omega\big\},$$ where $\supp x=\{\alpha \in A:x(\alpha)=1\}$ for $x\in \{0,1\}^A$, with the topology ($\omega_1$-box topology) generated by a base
 $$\mathcal{B}=\big\{\pr_A^{-1}(x): A\in [\omega_1]^{\le \omega}, x\in \{0,1\}^A\big \} ,$$
 where $\pr_A: \sigma(\{0,1\}^{\omega_1})\to \{0,1\}^A$ is a projection. 
 
We shall define a triple $(Q,\ll, B)$. Let $Q=\mathcal{B}$, a map $B:Q\to Q$ be the identity. Define a relation $\ll$ in the following way 
 $$\pr^{-1}_A(x_A)\ll\pr^{-1}_B(x_B)  \Leftrightarrow  \pr^{-1}_A(x_A)\supseteq \pr^{-1}_B(x_B),$$
 for any $\pr^{-1}_A(x_A), \pr^{-1}_B(x_B)\in \mathcal{B}$.
It is easy to see that the relation $\ll$ is transitive and it satisfies the condition $(D3)$. Now, we shall prove the condition (D4). Let $x\in X$ and $\pr^{-1}_{A_1}(x_{A_1}), \pr^{-1}_{A_2}(x_{A_2}) \in \{\pr^{-1}_A(x_A)\in \mathcal{B}:x\in \pr^{-1}_A(x_A)\}$. Since $x\in  \pr^{-1}_{A_1}(x_{A_1})\cap \pr^{-1}_{A_2}(x_{A_2})$ we get $x_{A_1}\upharpoonright A_2=x_{A_2}\upharpoonright A_1$. Set $A_3=A_1\cup A_2$ and $x_{A_3}\in \{0,1\}^{A_3}$ be such that $x_{A_3}\upharpoonright A_2=x_{A_2}$ and $x_{A_3}\upharpoonright A_1=x_{A_1}$. We have  $x_{A_3}\in \{0,1\}^{A_3}$ such that $x\in \pr^{-1}_{A_3}(x_{A_3})\subseteq \pr^{-1}_{A_1}(x_{A_1})\cap \pr^{-1}_{A_2}(x_{A_2})$. Hence $\pr^{-1}_{A_1}(x_{A_1}),\pr^{-1}_{A_2}(x_{A_2}) \ll \pr^{-1}_{A_3}(x_{A_3})$. 

We shall prove the condition $(D5)_{\omega_1}$. Let $D\subseteq \mathcal{B}$ be a countable upward directed family. We can construct a chain 
 $\{\pr^{-1}_{A_{n}}(x_{A_{n}}): n\in \omega\}\subseteq D$ such that for each set $\pr^{-1}_{A}(x_{A})\in D$ 
 there exists $n\in\omega$ such that $\pr^{-1}_{A}(x_{A})\ll\pr^{-1}_{A_{n}}(x_{A_{n}})$. 
 
Let $B=\bigcup\{A_n:n\in \omega\}$, $x_B\in \{0,1\}^B$ and $x_B\upharpoonright A_n=x_{A_n}$ for $n\in \omega$, then 
$$\bigcap\{\pr^{-1}_{A_n}(x_{A_n}): n\in \omega\}=\pr^{-1}_B(x_B)\in \mathcal{B},$$
and  $\pr^{-1}_B(x_B)\subseteq \bigcap D$, this completes the proof that the space $\sigma\big(\{0,1\}^{\omega_1}\big)$ is
F-Y countably domain representable.

Now we shall show that $X=\sigma\big(\{0,1\}^{\omega_1}\big)$ is not F-Y $\pi$-domain representable. 
Suppose that there exists the triple $(Q,\ll, B)$ satisfying the condition ($\pi$D1)--($\pi$D5). 
A family $\Pee =\{B(q):q\in Q\}$ is a $\pi$-base. 
By induction we define a sequence $\{Q_\alpha: \alpha<\omega_1\}$ such that the following conditions are satisfied:
\begin{enumerate}
\item $Q_\alpha\in [Q]^{\le \omega}$ and $Q_\alpha$ is upward directed, for $\alpha<\omega_1$,
\item $\bigcap \{B(q):q\in Q_\alpha\}=\pr^{-1}_{A_\alpha}(x_{A_\alpha}) \in \mathcal{B}$
for some $A_\alpha\in[\omega_1]^{\le \omega}$ and some $x_{A_\alpha}\in \{0,1\}^{A_\alpha}$, for $\alpha<\omega_1$,
\item $Q_\alpha\subseteq Q_\beta,$ for $\alpha<\beta<\omega_1$,
\item if $\bigcap \{B(q):q\in Q_\alpha\}=\pr^{-1}_{A_\alpha}(x_{A_\alpha})$ and $\bigcap \{B(q):q\in Q_\beta\}=\pr^{-1}_{A_\beta}(x_{A_\beta})$ for some $A_\alpha,A_\beta \in[\omega_1]^{\le \omega}$ and $x_{A_\alpha}\in \{0,1\}^{A_\alpha}$ and $x_{A_\beta}\in \{0,1\}^{A_\beta}$, then $\supp x_{A_\alpha}\varsubsetneq \supp x_{A_\beta}$, for  $\alpha<\beta<\omega_1$,
\end{enumerate}

We define a set $Q_0$. Take any  $r_0\in Q$. There exist a set $A_0\in [\omega_1]^{\le \omega}$ 
and $x_{A_0}\in \{0,1\}^{A_0}$ such that $\pr_{A_0}^{-1}(x_{A_0})\subseteq B(r_0)$. By conditions 
$(\pi D1), (\pi D3),(\pi D4)$ there exists $r_1\in Q$ such that $r_0\ll r_1$ and $B(r_1)\subseteq
 \pr_{A_0}^{-1}(x_{A_0}).$   Assume that we have defined $r_0\ll\ldots\ll r_n$ and a chain 
 $\{A_i:i\leq n\}\subseteq[\omega_1]^{\le \omega}$ and $x_{A_i}\in\{0,1\}^{A_i}$ such that
 $$\pr^{-1}_{A_{i-1}}(x_{A_{i-1}})\supseteq B(r_i)\supseteq \pr_{A_i}^{-1}(x_{A_i}) \text{ for } i\le n$$
 By conditions ($\pi$D1), ($\pi$D3), ($\pi$D4) there exists $r_{n+1}\in Q$ such that $r_n\ll r_{n+1}$ and $B(r_{n+1})\subseteq
 \pr_{A_{n}}^{-1}(x_{A_{n}}).$ There exist a set $A_{n+1}\in [\omega_1]^{\le \omega}$ and $x_{A_{n+1}}\in \{0,1\}^{A_{n+1}}$ such that $\pr_{A_{n+1}}^{-1}(x_{A_{n+1}})\subseteq B(r_{n+1})$. Let $Q_0=\{r_n:n\in\omega\}$. Then $\bigcap\{B(q):q\in Q_0\}=\bigcap\{\pr_{A_{n}}^{-1}(x_{A_{n}}):n\in\omega\}=\pr_{A}^{-1}(x_{A})$, where $A=\bigcup\{A_n:n\in\omega\}$ and $x_A$ is extension of the chain $\{x_{A_n}:n\in\omega\}$.

 Assume that we have defined  $\{Q_\alpha: \alpha< \beta\}$ which satisfies the conditions (D1)--(D4).
 
Let $\Raa _\beta= \bigcup \{Q_\alpha:\alpha< \beta\}.$ The set $\Raa _\beta$ is upward directed by conditions $(3), (1)$. Let $\Raa _\beta=\{p_n:n\in \omega\}$. By $(2)$ and $(3)$ we get $\bigcap\big\{B(p_n): n\in \omega \big\}\in\mathcal B$, hence there is a set $A_\beta\in [\omega_1]^{\le \omega}$ 
 and $x_{A_\beta}\in \{0,1\}^{A_\beta}$ such that $\bigcap \mathcal{R}_\beta=\pr_{A_\beta}^{-1}(x_{A_\beta})$. There exists  a set $A\in [\omega_1]^{\le \omega}$ 
  and $x_{A}\in \{0,1\}^{A}$ such that $\pr_{A}^{-1}(x_{A})\varsubsetneq \pr_{A_\beta}^{-1}(x_{A_\beta})$ and  $\supp x_{A_\beta}\varsubsetneq \supp x_{A}$. Since $\Pee $ is $\pi$-base we can find  $r_\beta\in Q$ such that $B(r_\beta)\subseteq \pr_{A}^{-1}(x_{A})$. Inductively we can define a sequence $\{q_n:n\in\omega\}\subseteq Q$, a chain  $\{A_n:n\in \omega\}\subseteq[\omega_1]^{\le \omega}$ and  a sequence $\{x_{A_n}\in\{0,1\}^{A_n}:n\in\omega\}$ such that $r_\beta, p_0\ll q_0$ and  $q_{n-1},p_n\ll q_n$ and 
 
  $$B(q_n)\supseteq \pr_{A_n}^{-1}(x_{A_n})\supseteq B(q_{n+1}) \text{ for } n\in\omega.$$

Let $Q_\beta=\bigcup \{Q_\alpha:\alpha< \beta\}\cup \{q_n:n\in \omega\}$. The set $Q_\beta$ satisfies conditions $(1)-(4)$, so we finish induction. The set
$\bigcup \{Q_\alpha:\alpha< \omega_1\}$ is upward directed.

By conditions $(2),(3)$ we have 
\begin{gather*}
\bigcap\{B(q):q\in \bigcup\{Q_\alpha:\alpha< \omega_1\}\big\}=\bigcap\{\pr_{A_\alpha}^{-1}(x_{A_\alpha}): \alpha <\omega_1\}=\\
=\pi_A^{-1}(x_A), \text{ for some } A \subseteq \omega_1\text{ and } x_A\in \{0,1\}^A,
\end{gather*}
where $\pi_A:\{0,1\}^{\omega_1}\to\{0,1\}^A$ is the projection.
By condition $(4)$ we get $|\supp x_A|=\omega_1$. Hence  $\pi_A^{-1}(x_A)\cap\sigma\big(\{0,1\}^{\omega_1}\big)=\emptyset$, a contradiction.
\begin{flushright}
$\Box$
\end{flushright}
 \end{example}
 
 Note that by the proof of Proposition 8.3. in \cite{FY15}, it follows that if there exists a triple $(Q, \ll, B)$, which F-Y countably $\pi$-represents a space $X$ and $|\bigcap\{B(q): q\in D\}|=1$ for every countable and upward directed set  $D\subseteq Q$, then this triple F-Y $\pi$-represents a space $X$.
 
\begin{theorem}
The Cartesian product of any family of F-Y countably $\pi$-domain representable spaces is F-Y countably $\pi$-domain representable.
\end{theorem}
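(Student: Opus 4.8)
The plan is to build a representing triple for the product directly from representing triples of the factors, rather than routing through Theorem~\ref{twierdz1}. For each $i\in I$ I fix a triple $(Q_i,\ll_i,B_i)$ that F-Y countably $\pi$-domain represents $X_i$; I may assume each $X_i$ is nonempty, since if some factor is empty then $\prod_{i\in I}X_i$ is empty and is represented trivially by $Q=\emptyset$. I let $Q$ consist of all pairs $(F,\sigma)$ where $F\in[I]^{<\omega}$ and $\sigma$ assigns to each $i\in F$ an element $\sigma(i)\in Q_i$, and I define $B((F,\sigma))=\bigcap_{i\in F}\pr_i^{-1}\big(B_i(\sigma(i))\big)$, the basic open box whose $i$-th side is $B_i(\sigma(i))$ for $i\in F$ and $X_i$ otherwise. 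I declare $(F,\sigma)\ll(G,\tau)$ to hold iff $F\subseteq G$ and $\sigma(i)\ll_i\tau(i)$ for every $i\in F$.

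I would then check the five axioms in turn. Axiom ($\pi$D1) is routine: a basic box of $\prod_{i\in I}X_i$ constrains only finitely many coordinates, and on each constrained coordinate the $\pi$-base $\{B_i(q):q\in Q_i\}$ of $X_i$ supplies a refinement, so the sets $B((F,\sigma))$ form a $\pi$-base. Transitivity ($\pi$D2) and monotonicity ($\pi$D3) descend coordinatewise from the same properties of the $\ll_i$, using $F\subseteq G\subseteq H$ together with transitivity of each $\ll_i$ for the former and ($\pi$D3) in each factor for the latter.

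For ($\pi$D4) the useful preliminary remark is that in any factor an element can always be extended upward: applying that factor's ($\pi$D4) to the pair $q,q$, which is legal because $B_i(q)\cap B_i(q)=B_i(q)\neq\emptyset$, produces some $r$ with $q\ll_i r$. Now suppose $B((F,\sigma))\cap B((G,\tau))\neq\emptyset$; a point of the intersection shows $B_i(\sigma(i))\cap B_i(\tau(i))\neq\emptyset$ for each $i\in F\cap G$. I put $H=F\cup G$ and define $\rho$ on $H$ by taking $\rho(i)$ above both $\sigma(i)$ and $\tau(i)$ on $F\cap G$ via factor ($\pi$D4), and above the single relevant value on $F\setminus G$ and on $G\setminus F$ via the extension remark. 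Then $(F,\sigma)\ll(H,\rho)$ and $(G,\tau)\ll(H,\rho)$, as required.

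The decisive step, and the one I expect to be the main obstacle, is ($\pi$D5$_{\omega_1}$). Let $D=\{(F_n,\sigma_n):n\in\omega\}$ be countable and upward directed, and set $F=\bigcup_{n\in\omega}F_n$. The pivotal observation is that $F$, being a countable union of finite sets, is countable; consequently, for each $i\in F$ the family $D_i=\{\sigma_n(i):i\in F_n\}\subseteq Q_i$ is countable, and upward directedness of $D$ transfers to $\ll_i$-directedness of $D_i$ because an upper bound $(F_k,\sigma_k)$ of two members of $D$ contributes $\sigma_k(i)$ as an upper bound in $D_i$. Each factor's ($\pi$D5$_{\omega_1}$) then gives a point $x_i\in\bigcap\{B_i(q):q\in D_i\}$ for $i\in F$; choosing arbitrary $x_i\in X_i$ for $i\notin F$ yields a point $x$ lying in every $B((F_n,\sigma_n))$, so $\bigcap_{d\in D}B(d)\neq\emptyset$. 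It is exactly this countability of the support union $F$ that lets the countable factorwise completeness survive in an arbitrary product, and getting this bookkeeping right is the delicate heart of the argument.
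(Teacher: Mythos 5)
Your proof is correct and takes essentially the same approach as the paper: the paper also builds the product triple directly from finite-support elements with the coordinatewise relation and box-shaped $B$-sets, merely encoding your partial functions $(F,\sigma)$ as total functions by adjoining to each factor a least element $0$ with $B_a(0)=X_a$. Your write-up in fact supplies the verifications (notably the upward-extension remark via ($\pi$D4) applied to a pair $(q,q)$, needed because $\ll$ is not assumed reflexive) that the paper compresses into ``it is easy to check.''
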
 
\begin{proof}
Let $X$ be a product of a family $\{X_a:a\in A\}$ of F-Y countably $\pi$-domain representable spaces.
Let $(Q_a,\ll_a, B_a)$ be a  triple which satisfies conditions ($\pi$D1)--($\pi$D4) and ($\pi$D5$_{\omega_1}$).
Any basic  nonempty open subset $U$ in $X$ is of the form $U=\prod\{U_a:a\in A\}$ where $U_a$ is nonempty
open subset of $X_a$ and $U_a=X_a$ for all but a finite number of $a$. We may assume that $0\in Q_a$ is the least element in $Q_a$  and $B_a(0)=X_a$ for each $a\in A$. Put 
$$Q=\left\{p\in\prod\{Q_a:a\in A\}:|\{a\in A:p(a)\ne 0\}|<\omega\right\}.$$ 

 Define a relation $\ll$ in $Q$ by the formula
$$p\ll q\Longleftrightarrow p(a)\ll_a q(a) \text{ for all } a\in A,$$
where $p,q\in Q$. Let us define a map $B:Q\to\tau^*(X)$ by $B(p)=\prod\{B_a(p(a)):a\in A\}$, where $p\in Q$. It is easy to check that $(Q,\ll,B)$ is countably $\pi$-domain represents $X$. 
\end{proof}
 
 \section{Domain representable spaces}
 
 In 2003 K. Martin  \cite{m03} showed that, if a space is domain representable, then the player $\alpha$ has a winning strategy in the strong Choquet game. In 2005 W. Fleissner and L.  Yengulalp \cite{FY15} showed that it is sufficient that a space is countably domain representable. Now, we shall show that the property of being countably domain representable is necessary.
 
 \begin{theorem}
 A topological space $X$ is Choquet complete if and only if it is F-Y countably domain representable.
\end{theorem}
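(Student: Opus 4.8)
The plan is to prove both implications by translating between the winning strategy for $\alpha$ in $Ch(X)$ and the triple $(Q,\ll,B)$ witnessing F-Y countable domain representability, mirroring the structure of the proof of Theorem~\ref{twierdz1} but now tracking the point that $\beta$ plays in the strong Choquet game.

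For the easier direction, I would show that if $X$ is F-Y countably domain representable via $(Q,\ll,B)$, then $\alpha$ has a winning strategy in $Ch(X)$. When $\beta$ plays $(U_n,x_n)$ with $x_n\in U_n$, I would have $\alpha$ respond by choosing $q_n\in Q$ with $x_n\in B(q_n)\subseteq U_n$ (possible since $\{B(q):q\in Q\}$ is a base by (D1)) and set $V_n=B(q_n)$. The key bookkeeping is to ensure the chosen $q_n$ form an upward directed (indeed increasing) sequence so that (D5$_{\omega_1}$) applies: here I would exploit (D4), the upward directedness of $\{q\in Q:x_n\in B(q)\}$, together with the fact that in the strong Choquet game $\beta$ commits to points $x_n$. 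The nesting $V_{n-1}\supseteq U_n\ni x_n$ lets me arrange the $q_n$ increasingly; then (D5$_{\omega_1}$) gives $\bigcap_n B(q_n)\ne\emptyset$, so $\alpha$ wins.

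For the harder direction, I would fix a winning strategy $s$ for $\alpha$ in $Ch(X)$ and build $Q$ from finite increasing sequences of strategy-values, just as in Theorem~\ref{twierdz1}, but now each legal move records both an open set and a point. So elements of $Q$ would be finite sets $\{s(\overrightarrow{x}\circ\overrightarrow{U}_0),\ldots,s(\overrightarrow{x}\circ\overrightarrow{U}_i)\}$ coming from partial plays in the strong Choquet game, with $B$ returning the last strategy-value. The relation $\ll$ would be defined analogously using $\preceq$ on strong-Choquet partial plays. The crucial difference from the $\pi$-domain case is that I must verify the full directedness condition (D4) for every point $x$ of $X$, not merely the $\pi$-compatibility condition ($\pi$D4): given $x\in B(p)\cap B(q)$, I must produce $r$ with $p,q\ll r$, and the point $x$ itself must be used as the point $\beta$ plays next so that the strategy $s$ (which requires a point inside each open set) can legitimately be extended.

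The main obstacle I expect is precisely condition (D4): in the Banach--Mazur argument of Theorem~\ref{twierdz1} it sufficed to extend the plays using any point of the common open set $V=B(p)\cap B(q)$, but in the strong Choquet setting $\beta$ must name a \emph{specific} point $x$ with $x\in V$, and I need $x\in B(r)$ for the resulting $r$. I would handle this by, given $x\in B(p)\cap B(q)$, extending both partial plays using $x$ as the distinguished point at each new round, which guarantees $x$ lies in every newly produced $s$-value and hence $x\in B(r)$. Once (D4) is secured, (D5$_{\omega_1}$) follows exactly as in Theorem~\ref{twierdz1}: a countable upward directed $D$ yields an increasing chain whose $B$-values are the successive moves of a single run of $s$ against some $\beta$, and since $s$ is winning that intersection is nonempty. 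Conditions (D1)--(D3) are verified routinely as before, noting that $\{s(\overrightarrow{x}\circ\overrightarrow{U}):\text{legal moves}\}$ forms a base (not merely a $\pi$-base) because $\beta$ can start from any point in any open set.
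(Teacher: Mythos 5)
Your proposal is correct and follows essentially the same route as the paper: for the hard direction you build $Q$ from finite decreasing sets of strategy values of partial plays in the strong Choquet game, with $B$ returning the last value, and your key observation --- that condition (D4) is secured by always using the given point $x$ as $\beta$'s named point when extending both plays, so that $x$ survives into $B(r)$ --- is exactly the modification the paper relies on when it says the rest is ``similar to the proof of Theorem~\ref{twierdz1}''. The only difference is that for the easier direction the paper simply cites Theorem 4.3(3) of Fleissner--Yengulalp, whereas you reprove it with the standard argument (choose $q_n$ with $x_n\in B(q_n)\subseteq U_n$ inside the directed set $\{q: x_n\in B(q)\}$ and apply (D5$_{\omega_1}$) to the resulting chain), which is fine.
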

\begin{proof}
By Theorem 4.3 (3) in \cite{FY15} (see also \cite{m03}) it suffices to prove that if $X$ is Choquet complete, then $X$ is F-Y countably domain representable.
Assume that $X$ is Choquet complete.

 Let $s$ be a winning strategy for player $\alpha$. We consider a family $Q$ consisting of all finite sets
  $\{s(\overrightarrow{x_0}\circ\overrightarrow{U}_0),\ldots ,s(\overrightarrow{x_i}\circ\overrightarrow{U}_i)\}$, 
  where $\overrightarrow{x_m}\circ\overrightarrow{U}_m=(U^m_0,x^m_0,\ldots, U^m_{j_m},x^m_{j_m}),\; m\leq i$,  is a partial play in the strong Choquet game, i.e.,
\begin{gather*}
U^m_0\supseteq s(U^m_0,x^m_0)\supseteq U^m_1\supseteq s(U^m_0,x^m_0,U^m_1,x^m_1)\supseteq\ldots\supseteq U^m_{j_m}\\
 \supseteq s(U^m_0,x^m_0,\ldots,U^m_{j_m},x^m_{j_m})
\end{gather*} 
  and $s(\overrightarrow{x_0}\circ\overrightarrow{U}_0)\supseteq\ldots \supseteq s(\overrightarrow{x_i}\circ\overrightarrow{U}_i)$.
  
  Let us define a relation $\ll$ on the family $Q$, 
  \begin{gather*}
 \{s(\overrightarrow{x_0}\circ\overrightarrow{U}_0),\ldots ,s(\overrightarrow{x_i}\circ\overrightarrow{U}_i)\}\ll \{s(\overrightarrow{y_0}\circ\overrightarrow{W}_0),\ldots ,s(\overrightarrow{y_k}\circ\overrightarrow{W}_k)\} \text{ iff }\\ s(\overrightarrow{x_i}\circ\overrightarrow{U}_i)\supseteq s(\overrightarrow{y_0}\circ\overrightarrow{W}_0) \; \&  \;
  i\leq k \; \& \;\forall_{j\leq i}\;\exists_{r\leq k} \;(\overrightarrow{x_j}\circ\overrightarrow{U}_j)\preceq (\overrightarrow{y_r}\circ\overrightarrow{W}_r).
  \end{gather*} 
  
  We define a map $B:Q\to\tau^*$ by the formula 
  $$B\{s(\overrightarrow{x_0}\circ\overrightarrow{U}_0),\ldots ,s(\overrightarrow{x_i}\circ \overrightarrow{U}_i)\}=s(\overrightarrow{x_i}\circ\overrightarrow{U}_i),$$
 with $s(\overrightarrow{x_i} \circ \overrightarrow{U}_i)\subseteq s(\overrightarrow{x_j}\circ \overrightarrow{U}_j)$  for $j\le i$, for each $\{s(\overrightarrow{x_0}\circ \overrightarrow{U}_0),\ldots ,s(\overrightarrow{x_i}\circ\overrightarrow{U}_i)\}\in Q$. The rest of the proof is similar to the proof of Theorem \ref{twierdz1}.
\end{proof}

\end{document}